\newcommand{\cll}{\mathcal{L}}
\newcommand{\caa}{\mathcal{A}}
\newcommand{\chh}{\mathcal{H}}
\newcommand{\id}{\mbox{id}}
\newcommand{\si}{\sigma}
\newcommand{\de}{\delta}
\newcommand{\ot}{\otimes}
\newcommand{\trl}{\triangleleft}
\newcommand{\trr}{\triangleright}
\newcommand{\inv}{{}^{-1}}
\newcommand{\dsi}{_{\sigma}}
\newcommand{\usi}{^{\sigma}}
\newcommand{\di}{{}_{1}}
\newcommand{\dii}{{}_{2}}
\newcommand{\diii}{{}_{3}}
\newcommand{\moo}{{}_{(0)}}
\newcommand{\moi}{{}_{(-1)}}
\newcommand{\moii}{{}_{(-2)}}
\newcommand{\moiii}{{}_{(-3)}}
\newcommand{\moiv}{{}_{(-4)}}
\newcommand{\mov}{{}_{(-5)}}
\newcommand{\li}{{}_{[1]}}
\newcommand{\lii}{{}_{[2]}}
\def\rbiprod{{\cdot\kern-.33em\triangleright\!\!\!<}}
\def\lbiprod{{>\!\!\!\triangleleft\kern-.33em\cdot\, }}
\def\lrbiprod{{\ \cdot\kern-.60em\triangleright\kern-.33em\triangleleft\kern-.33em\cdot\, }}
\def\lprod{{>\!\!\!\triangleleft\kern-.33em\ \, }}
\newcommand{\allowpagebreak}
\newtheorem{Theorem}{Theorem}[section]
\newtheorem{theorem}[Theorem]{Theorem}
\newtheorem{proposition}[Theorem]{Proposition}
\newtheorem{definition}[Theorem]{Definition}
\newtheorem{example}[Theorem]{Example}
\title{Matched Pairs of Generalized Lie Algebras and Cocycle Twists}
\author{Tao Zhang}
\date{}
\begin{document}

 \maketitle

 \setcounter{section}{0}

\begin{abstract}
We introduce the conception of  matched pairs of $(H, \beta)$-Lie
algebras, construct an $(H, \beta)$-Lie algebra through them. We
prove that the cocycle twist of a  matched pair of $(H, \beta)$-Lie
algebras can also be matched.
\par\smallskip
{\bf 2000 MSC:}  17B62, 18D35
\par\smallskip
{\bf Keywords:}  $(H, \beta)$-Lie algebra, Matched Pair, Cocycle
Twist.
\end{abstract}

\section{Introduction and Preliminaries}

A generalized Lie algebra in the comodule category of a cotriangular
Hopf algebra which included Lie superalgebras and Lie color algebras
as special cases has been studied by many authors, see \cite{BFM01,
FM94} and the references therein.

On the other hand, there is a general theory of matched pairs of Lie
algebras which was introduced and studied by Majid in \cite{Ma90,
Ma95}. It says that we can construct a new Lie algebra through a
matched pair of Lie algebras.

In this note, we introduce the conception of  matched pairs of $(H,
\beta)$-Lie algebras, construct an $(H, \beta)$-Lie algebra through
them. Furthermore, we prove that the cocycle twist of a  matched
pair of $(H, \beta)$-Lie algebras can also be matched.

We now fix some notation. Let $H$ be a Hopf algebra, write the
comultiplication $\Delta: H\to H\ot H$ by $\Delta(h)=\sum h\di\ot
h\dii$. When $V$ is a left $H$-comodule with coaction $\rho: V\to
H\ot V$, we write $\rho(v)=\sum v\moi\ot v\moo$. We frequently omit
the summation sign in the following context.

A pair $(H, \beta)$ is called a cotriangular Hopf algebra, if $H$ is
a Hopf algebra,  $\beta: H\ot H \to k$ is a convolution-invertible
bilinear map satisfying for all $ h, g, l\in H$,

(CT1)\quad $\beta(h\di, g\di)g\dii h\dii=\beta(h\dii, g\dii)h\di
g\di$;

(CT2)\quad$ \beta(h, gl)=\beta(h\di, g)\beta(h\dii, l)$;

(CT3)\quad$ \beta(hg, l)=\beta(g, l\di)\beta(h, l\dii)$;

(CT4)\quad$ \beta(h\di, g\di)\beta(g\dii,
h\dii)=\varepsilon(g)\varepsilon(h)$.

\noindent A map satisfying (CT2)--(CT4) is called a skew-symmetric
bicharacter. Throughout this note, we always assume $H$ is a
cotriangular Hopf algebra that is commutative and cocommutative.

A convolution invertible map $\si: H\ot H\to k$ is called a left
cocycle if for all $h, g, l\in H$,
$$\si(h\di, g\di)\si(h\dii g\dii, l)=\si(g\di, l\di)\si(h,  g\dii l\dii),$$
and a right cocycle if
$$\si(h\di g\di, l)\si(h\dii,  g\dii)=\si(h,  g\di l\di)\si(g\dii,  l\dii).$$

\begin{definition} Let $(H, \beta)$ be a cotriangular
Hopf algebra. An $(H, \beta)$-Lie algebra is a left $H$-comodule
$\cll$ together with a Lie bracket
 $[,]: \cll \ot \cll \to \cll$ which is an $H$-comodule morphism satisfying,  for
all $a, b, c\in \cll$

(1) $\beta$-anticommutativity:   
$$ [a,b] =-\beta(a\moi, b\moi)[b\moo, a\moo],$$

(2) $\beta$-Jacobi identity: 
$$[[a, b],c]+\beta(a\moi, b\moi c\moi)[[b\moo,c\moo],a\moo]+\beta(a\moi b\moi, c\moi)[[c\moo, a\moo], b\moo]=0.$$
\end{definition}

When $H=k\mathbb{Z}_2$, $\beta(x, y)=(-1)^{xy}$, for all $x, y\in
\mathbb{Z}_2$, this is exactly Lie superalgebra. When $H=kG$, where
$G$ is an abelian group with a bicharacter $\beta: G\times G\to
k^{*}$ such that $\beta(h, g)=\beta(g, h)^{-1}$ for all $h, g\in G$,
this is exactly Lie color algebra studied in \cite{CSO06, Sch79}.

\begin{example}\rm Let $A$ be a left $H$-comodule algebra. Define $[,
]_{\beta}$ to be $[a, b]_{\beta}:=ab- \sum \beta(a\moi, b\moi)$
$b\moo a\moo$. Then $(A, [, ]_{\beta})$ is an  $(H, \beta)$-Lie
algebra and is denoted by $A_{\beta}$.
\end{example}

\begin{definition}  Let $(H, \beta)$ be a cotriangular Hopf
algebra. An $(H, \beta)$-Lie coalgebra is a left $H$-comodule $\caa$
together with a Lie cobracket $ \delta: \caa \to \caa \ot \caa$
which is an $H$-comodule morphism satisfying,

(1) $\beta$-anticocommutativity:  
$$ \delta(a) =-\beta(a\li\moi, a\lii\moi) a\lii\moo\ot a\li\moo,$$

(2) $\beta$-co-Jacobi identity: 
\begin{eqnarray*}
 a\li\li\ot a\li\lii \ot a\lii+\beta(a\li\li\moi a\li\lii\moi,  a\lii\moi)a\lii \ot a\li\li\ot a\li\lii\\
+\beta(a\li\li\moi, a\li\lii\moi a\lii\moi)a\li\lii \ot a\lii \ot a\li\li=0.
\end{eqnarray*}

\noindent where we use the notion $\de(a)=\sum a\li\ot a\lii$ for
all $a\in \caa$.
\end{definition}

\begin{example}\rm Let $C$ be a left $H$-comodule coalgebra.
Define $\de_{\beta}:C\to C\ot C$ to be 
$$\de_{\beta}(c)=\sum c\di\ot
c\dii- \beta(c\di\moi, c\dii\moi)c\dii\moo\ot c\di\moo.$$ 
Then $(C, \de_{\beta})$ is an  $(H, \beta)$-Lie coalgebra and is denoted by
$(C_{\beta}, \de_{\beta})$.
\end{example}

\begin{proposition} Let $H$ be a Hopf algebra with a
skew-symmetric bicharacter $\beta: H\ot H\to k$, and suppose $\si:
H\ot H\to k$ is a left cocycle.

 (a) Define $H_{\si}$  to be  $H$ as a coalgebra, with
multiplication defined to be 
$$h\cdot_{\si}l:=\si\inv(h\di, l\di)h\dii l\dii\si(h\diii, l\diii).$$ 
Then $H$ (with a suitable
antipode) is a Hopf algebra.

(b) Define the map $\beta\dsi: H\dsi\ot H\dsi\to k$ by, for all $h, l\in H$, 
$$\beta\dsi(h, l):=\si\inv(l\di, h\di)\beta(h\dii, l\dii)\si(h\diii, l\diii).$$ 
If $(H,\beta)$ is cotriangular, then
$(H\dsi,\beta\dsi)$ is also cotriangular.

(c)  If $A$ is a left $H$-comodule algebra, define $A\usi$ to be $A$
as a vector space and $H\dsi$-comodule, with multiplication given by: 
$$a\cdot\usi b:=\si(a\moi, b\moi)a\moo b\moo.$$
Then $A\usi$ is an $H\dsi$-comodule algebra.
\end{proposition}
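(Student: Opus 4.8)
All three parts are the comodule-side version of the classical $2$-cocycle (Doi--Takeuchi) twist, and conceptually they express that tensoring with $\si$ gauges the monoidal (indeed braided) structure of the category of $H$-comodules; this indicates why the constructions must again yield a cotriangular Hopf algebra and a comodule algebra. Concretely, however, I would prove everything by direct verification of the defining axioms in Sweedler notation, using the left cocycle identity $\si(h\di,g\di)\si(h\dii g\dii,l)=\si(g\di,l\di)\si(h,g\dii l\dii)$ as the engine and using freely the standing assumptions that $H$ is commutative and cocommutative --- cocommutativity is what lets the several scalar factors $\si^{\pm1}$, evaluated on Sweedler components at various depths, be brought into adjacent positions and cancelled via $\si\ast\si\inv=\ve\ot\ve$.

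For (a) I would first note that rescaling $\si$ by a nonzero scalar leaves $\cdot_{\si}$ unchanged (one $\si\inv$ and one $\si$ appear symmetrically), so I may assume $\si(1,1)=1$; the cocycle identity with $h=g=1$ then forces $\si(1,-)=\si(-,1)=\ve$, whence $1$ is a two-sided unit for $\cdot_{\si}$. The core is associativity: expanding $(h\cdot_{\si}g)\cdot_{\si}l$ and $h\cdot_{\si}(g\cdot_{\si}l)$ and applying the cocycle identity (and the companion identity satisfied by $\si\inv$) collapses one side to the other. Compatibility of $\cdot_{\si}$ with the unchanged, cocommutative comultiplication is then routine because the cocycle values are scalars, giving a bialgebra $H\dsi$. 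Finally I would exhibit the antipode as $S\dsi(h)=U(h\di)\,S(h\dii)\,U\inv(h\diii)$ with $U(h)=\si(h\di,S(h\dii))$ and $U\inv$ its convolution inverse, and verify the two antipode identities by one more cocycle computation.

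For (b) I would test (CT1)--(CT4) for $\beta\dsi$ relative to $\cdot_{\si}$, first recording that $\beta\dsi$ is convolution invertible with inverse assembled from $\beta\inv$ and $\si^{\pm1}$. Substituting $\beta\dsi(h,l)=\si\inv(l\di,h\di)\beta(h\dii,l\dii)\si(h\diii,l\diii)$, the bicharacter laws (CT2), (CT3) --- now with $g\cdot_{\si}l$ in the argument --- and the skew-symmetry (CT4) follow by cancelling $\si\si\inv$ pairs against the corresponding identities for $\beta$. The delicate axiom is (CT1): one must show $\beta\dsi(h\di,g\di)\,g\dii\cdot_{\si}h\dii=\beta\dsi(h\dii,g\dii)\,h\di\cdot_{\si}g\di$, where the $\si\inv/\si$ placement in the definitions of both $\beta\dsi$ and $\cdot_{\si}$ is precisely arranged so that the twist of the bicharacter compensates for the failure of $\cdot_{\si}$ to be commutative. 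I expect (CT1) to be the main obstacle of this part, and would carry it out by writing both sides over a common iterated coproduct and matching cocycle factors, invoking (CT1) for $\beta$ together with cocommutativity.

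For (c), since $H\dsi=H$ as a coalgebra and $A\usi=A$ as a comodule, the coaction $\rho$ is literally unchanged and is automatically an $H\dsi$-coaction, so only the algebra assertions require work. Associativity of $\cdot\usi$ is the cleanest use of the cocycle identity: expanding $(a\cdot\usi b)\cdot\usi c$, using that $A$ is an $H$-comodule algebra ($\rho(xy)=x\moi y\moi\ot x\moo y\moo$) together with coassociativity of $\rho$, produces $\si(a\moii,b\moii)\si(a\moi b\moi,c\moi)$, which the left cocycle identity converts to $\si(b\moii,c\moii)\si(a\moi,b\moi c\moi)$, namely $a\cdot\usi(b\cdot\usi c)$. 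The crucial and final point is that $\rho$ is an algebra map $A\usi\to H\dsi\ot A\usi$, i.e.\ $\rho(a\cdot\usi b)=(a\moi\cdot_{\si}b\moi)\ot(a\moo\cdot\usi b\moo)$; expanding the right-hand side by coassociativity yields a $\si\inv$ and a $\si$ from $\cdot_{\si}$ and a further $\si$ from $\cdot\usi$, and cocommutativity lets the $\si\inv$ annihilate one $\si$ via $\si\ast\si\inv=\ve\ot\ve$, leaving exactly the single factor $\si(a\moii,b\moii)$ appearing on the left. I regard this cancellation, together with the verification of (CT1) in (b), as the two genuine obstacles; everything else is bookkeeping secured by cocommutativity.
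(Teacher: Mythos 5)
The paper offers no proof of this proposition at all: it is stated as a preliminary and implicitly delegated to the standard cocycle-twist (Doi--Takeuchi) machinery of \cite{BFM01}, so there is no argument of the paper's to compare yours against. Your direct verification is correct in outline and identifies the right pressure points --- the left cocycle identity for associativity in (a) and (c), (CT1) as the delicate axiom in (b), and the $\si\ast\si\inv=\ve\ot\ve$ cancellation for the comodule-algebra condition in (c). One subtlety deserves to be made explicit rather than left inside ``the companion identity satisfied by $\si\inv$'': the product here is $\si\inv(h\di,l\di)h\dii l\dii\si(h\diii,l\diii)$, with $\si\inv$ on the \emph{left}, which is the opposite of the usual Doi convention $\si(\cdot)\cdots\si\inv(\cdot)$. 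Associativity of this version needs $\si\inv$ to be a left cocycle (equivalently $\si$ a right cocycle), and for a general Hopf algebra an invertible left cocycle only has $\si\inv$ as a \emph{right} cocycle. It is exactly the standing cocommutativity assumption that rescues this: cocommutativity of $H^{\ot 3}$ makes the convolution algebra $\Hom(H^{\ot 3},k)$ commutative, so the left and right cocycle conditions coincide and are inherited by $\si\inv$. Since you invoke cocommutativity throughout, your argument does go through, but this is the one place where the hypothesis is load-bearing rather than mere bookkeeping, and the write-up should say so. Two further small points: the normalization $\si(1,1)=1$ is also needed in part (c) (otherwise $1_A$ is only a unit for $\cdot\usi$ up to the scalar $\si(1,1)$), and your antipode formula $U(h)=\si(h\di,S(h\dii))$ should be adjusted to the paper's reversed $\si\inv/\si$ placement; neither affects correctness under your normalization.
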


\begin{definition} \label{dfnlb}  An $(H, \beta)$-Lie bialgebra
$\chh$ is a vector space equipped simultaneously with an $(H,
\beta)$-Lie algebra structure $(\chh, [,])$ and an $(H, \beta)$-Lie
coalgebra $(\chh, \delta)$ structure such that the following
compatibility condition is satisfied:

(LB):
\begin{eqnarray*}
 \delta([a, b])&=& [a, b\li]\ot b\lii+ \beta(a\moi, b\li\moi) b\li\moo\ot[a\moo, b\lii]\\
 &&+ a\li\ot [a\lii, b] +\beta(a\lii\moi, b\moi) [a\li, b\moo]\ot a\lii\moo.
\end{eqnarray*}
We denoted it by $(\chh, [,], \delta)$. \end{definition}

\section{Matched Pair of $(H, \beta)$-Lie Algebras}

Let $\caa, \chh$ be both $(H, \beta)$-Lie algebras. For $a, b\in \caa$, $h, g\in \chh$, denote maps $\trr :\chh \otimes \caa \to \caa$, $\trl :\chh \otimes \caa \to \chh$, by $ \trr (h \otimes a) = h \trr a$, $ \trl (h \otimes a) = h \trl a$. If  $\chh$ is an  $(H, \beta)$-Lie algebra and  the map $\trr:\chh\ot \caa \to \caa$ satisfying
$$[h, g]\trr a=h\trr g\trr a-\beta(h\moi, g\moi)g\moo\trr h\moo\trr a ,$$ 
then $\caa$ is called a left $\chh$-module. Note that
when considering $(H, \beta)$-Lie algebras, all action maps must be
$H$-comodule maps. Thus for  $h\in \chh, a\in \caa$, we have
$$\rho(h\trr a)=\sum h\moi a\moi\ot h\moo\trr a\moo.$$ 
If $\caa$ is an $\chh$-module Lie algebra, then 
$$h\trr [a, b]=[h\trr a,b]+\beta(h\moi, a\moi)[ a\moo, h\moo\trr b]$$
 and  if  $\caa $ is an
$\chh$-module Lie coalgebra, then 
$$\de(h\trr a)=h\trr a\li\ot a\lii+\beta(h\moi, a\li\moi)a\li\moo\ot h\moo\trr a\lii.$$

\begin{definition} Let $\caa$ and  $\chh$ be $(H,\beta)$-Lie algebras. If  $\caa$ is a left $\chh$-module, $\chh$ is
a right $\caa$-module, and the following (BB1) and (BB2) hold, then
 $(\caa, \chh)$  is called  a\emph{ matched pair of $(H,\beta)$-Lie
 algebras}.

(BB1):
$$h\trr [a, b]=[h\trr a, b]+\beta(h\moi, a\moi)[ a\moo, h\moo\trr
b]+(h\trl a)\trr b-\beta(a\moi, b\moi)(h\trl b\moo)\trr a\moo,$$

(BB2): 
$$[h, g]\trl a=[h, g\trl a]+\beta(g\moi, a\moi)[h\trl a\moo,
g\moo]+h\trl (g\trr a)-\beta(h\moi, g\moi)g\moo\trl(h\moo\trr a).$$
\end{definition}

\begin{theorem}\label{th1}
 If  $(\caa, \chh)$ is a matched pair of $(H, \beta)$-Lie algebras,
 then the double cross sum $\caa\bowtie \chh$ form an $(H, \beta)$-Lie
 algebra which equals to $\caa\oplus \chh$ as linear space, but with Lie bracket
\begin{eqnarray*} [a\oplus h, b\oplus g]
&=&([a, b]+ h\trr b-\beta(g\moi, a\moi)g\moo\trr a\moo)\\
&&\oplus ([h, g]+h\trl b-\beta(g\moi, a\moi)g\moo\trl a\moo).
\end{eqnarray*}
\end{theorem}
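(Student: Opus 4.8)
The plan is to check that the bracket displayed on $\caa\bowtie\chh=\caa\oplus\chh$ satisfies the three requirements of an $(H,\beta)$-Lie algebra: that it is an $H$-comodule morphism, that it is $\beta$-anticommutative, and that it obeys the $\beta$-Jacobi identity. The coaction on the direct sum is $(a\oplus h)\moi\ot(a\oplus h)\moo=a\moi\ot(a\moo\oplus0)+h\moi\ot(0\oplus h\moo)$, so every expression to be checked is multilinear in the two summands; hence I would reduce each verification to pure generators, i.e. elements of the form $a\oplus0$ and $0\oplus h$. The comodule-morphism property then follows because each building block of the bracket is already a comodule map: $[\,,]$ on $\caa$ and on $\chh$ and the actions $\trr,\trl$ by assumption, and the braided terms $\beta(g\moi,a\moi)g\moo\trr a\moo$ and $\beta(g\moi,a\moi)g\moo\trl a\moo$ by the standard computation that the braiding determined by a bicharacter $\beta$ on a commutative cocommutative $H$ is a comodule map.

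For $\beta$-anticommutativity the pure-$\caa$ and pure-$\chh$ cases are immediate from the anticommutativity of the two given brackets. The only real case is the cross one: comparing $[a\oplus0,\,0\oplus g]$ with $-\beta(a\moi,g\moi)[0\oplus g\moo,\,a\moo\oplus0]$, the braided term in the first is exactly what is needed to match the direct action term $h\trr b$ produced in the opposite ordering, the two being identified through the skew-symmetry (CT4) of $\beta$ together with cocommutativity of $H$.

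The $\beta$-Jacobi identity is the substance of the proof. Reducing to triples of pure generators, the all-$\caa$ and all-$\chh$ triples collapse (the mixed pieces of the bracket vanish) to the $\beta$-Jacobi identities of $\caa$ and of $\chh$, which hold by hypothesis. For a triple with two generators in $\caa$ and one in $\chh$, I would expand the three cyclic double brackets and sort the result into its $\caa$- and $\chh$-components. The $\caa$-component, after the purely-$\caa$ terms cancel, is governed by the $\chh$-module-Lie-algebra relation $h\trr[a,b]=[h\trr a,b]+\beta(h\moi,a\moi)[a\moo,h\moo\trr b]$, while the leftover cross terms reassemble precisely into (BB1). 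The mirror triple, one generator in $\caa$ and two in $\chh$, reduces in the same way to the right-module-Lie-algebra relation for $\chh$ over $\caa$ together with (BB2).

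The hard part is the bookkeeping in these two mixed cases. Each nested bracket produces $\beta$-factors whose arguments are iterated coactions, for instance $\beta((g\moo)\moi,(a\moo)\moi)$ coming from an inner braided term, and before (BB1) or (BB2) can be applied verbatim these must be renormalized to the single-coaction shape in which those identities are stated. This is done using the bicharacter rules (CT2) and (CT3), the compatibility (CT1), and cocommutativity to commute coaction legs, and it is the step where a stray $\beta$-factor is easiest to lose. Once all the $\beta$-arguments in the three cyclic terms are aligned, the cancellation is forced by the module relations together with (BB1) in the first mixed case and (BB2) in the second, which completes the proof.
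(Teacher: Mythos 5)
Your overall architecture is the same as the paper's: decompose into pure triples, observe that the all-$\caa$ and all-$\chh$ triples reduce to the Jacobi identities of the factors, and show that each mixed triple is closed by the matched-pair data. The paper carries out exactly one of the two mixed cases in full, the triple $h,g\in\chh$, $a\in\caa$, and the entire content of its proof is the $\beta$-renormalization you defer to the end: it shows that the $\caa$-component of the cyclic sum collapses to the left $\chh$-module axiom $[h,g]\trr a=h\trr g\trr a-\beta(h\moi,g\moi)g\moo\trr h\moo\trr a$ and the $\chh$-component to (BB2), then declares the remaining cases similar. So as a plan your proposal is aligned with the paper; as a proof it is incomplete in exactly the place where the paper does its work, namely the (CT2)--(CT4)/cocommutativity manipulations of iterated coactions.

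There is, however, one point where your accounting of hypotheses is wrong and would derail the computation if followed literally. In a matched pair the relation $h\trr[a,b]=[h\trr a,b]+\beta(h\moi,a\moi)[a\moo,h\moo\trr b]$ is \emph{not} assumed and in general fails; it is precisely the identity that (BB1) deforms by the cross terms $(h\trl a)\trr b-\beta(a\moi,b\moi)(h\trl b\moo)\trr a\moo$. So for the triple $a,b\in\caa$, $h\in\chh$ you cannot first cancel a ``module-Lie-algebra part'' and then separately feed ``leftover cross terms'' into (BB1): the $\caa$-component of the cyclic sum is, after renormalization, the statement of (BB1) as a whole, while the $\chh$-component is the right $\caa$-module axiom $h\trl[a,b]=(h\trl a)\trl b-\beta(a\moi,b\moi)(h\trl b\moo)\trl a\moo$ --- an ingredient your proposal never invokes. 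Dually, for the triple $h,g\in\chh$, $a\in\caa$ the ingredients are (BB2) together with the left $\chh$-module axiom for $\caa$, not a ``right-module-Lie-algebra relation for $\chh$ over $\caa$.'' Once the hypotheses are matched to the correct components, the rest of your outline (including the anticommutativity check via (CT4) and cocommutativity, which the paper omits) goes through.
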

\begin{proof} We show that the $\beta$-Jacobi identity holds for
$\caa \bowtie \chh$. By definition, $$[h, a]=h\trr a+h\trl a, [a,
h]=-\beta(a\moi, h\moi)a\moo\trr h\moo-\beta(a\moi, h\moi)a\moo\trl
h\moo.$$  
Thus we have $$[[h, g], a]=[h, g]\trr a+[h, g]\trl a,$$
for alll $h, g\in \chh,  a\in \caa$, and for the second item of $\beta$-Jacobi identity
\begin{eqnarray*}
&&\beta(h\moi g\moi, a\moi)[[a\moo, h\moo], g\moo]\\
&=&\beta(h\moii g\moii, a\moii)\beta(a\moi, h\moi)\beta((h\moo\trr
a\moo)\moi, g\moi)\\
&& g\moo\trr (h\moo\trr a\moo)\moo-\beta(h\moii g\moii,
a\moii)\beta(a\moi, h\moi)\\
&&[h\moo\trl a\moo, g\moo]+\beta(h\moii g\moii, a\moii)\beta(h\moi,
a\moi)\\
&&\beta((h\moo\trr a\moo)\moi, g\moi)g\moo\trl(h\moo\trr a\moo)\moo
\end{eqnarray*}
The right hand side is equal to:
\begin{eqnarray*}
&&\mbox{1st of RHS}\\
&=&\beta(h\moiii g\moii, a\moiii)\beta(a\moii,
h\moii)\beta(h\moi a\moi, g\moi)g\moo\trr h\moo\trr a\moo\\
&=&\beta(h\moiii, a\moiii)\beta(g\moiii, a\moiv)\beta(a\moii, h\moii)\\
&&\times\beta(h\moi, g\moi)\beta(a\moi, g\moii) g\moo\trr h\moo\trr a\moo\\
&=&\beta(g\moiii, a\moii)\beta(h\moi, g\moi)\beta(a\moi, g\moii) g\moo\trr h\moo\trr a\moo\\
&=&\beta(g\moii, a\moii)\beta(h\moi, g\moiii)\beta(a\moi, g\moi) g\moo\trr h\moo\trr a\moo\\
&=&\beta(h\moi, g\moi)g\moo\trr h\moo\trr a
\end{eqnarray*}
where we use the fact that $\trr: \chh\ot \caa\to \caa$ is a left
$H$-comodule map in the first equality, (CT2) and (CT3) for  $\beta$
in second equality, the cocommutative of $H$ in the fourth equality
and (CT4) for $\beta$ in the fifth equality. Similarly, 
$$\mbox{3rd of RHS} =\beta(h\moi, g\moi)g\moo\trl (h\moo\trr a).$$ 
and 
\begin{eqnarray*}
\mbox{2ed of RHS} &=&-\beta(h\moii, a\moii)\beta(g\moii, a\moiii)\beta(a\moi, h\moi)[h\moo\trl a\moo, g\moo] \\
&=&-\beta(g\moi, a\moi)[h\trl a\moo, g\moo].
\end{eqnarray*}

As for the third item of $\beta$-Jacobi identity,
\begin{eqnarray*}
&&\beta(h\moi, g\moi a\moi)[[g\moo, a\moo], h\moo]\\
&=&-\beta(h\moii, g\moi a\moi)\beta((g\moo\trr a\moo)\moi,
h\moi)h\moo\trr(g\moo\trr a\moo)\moo\\
&&-\beta(h\moii, g\moi a\moi)\beta((g\moo\trr a\moo)\moi,
h\moi)h\moo\trl(g\moo\trr a\moo)\moo\\
&&+\beta(h\moi, g\moi a\moi)[g\moo\trl a\moo, h\moo]
\end{eqnarray*}
The right hand side is equal to 
\begin{eqnarray*}
&&\mbox{1st of RHS}\\
&=&-\beta(h\moii, g\moii a\moii)\beta(g\moi a\moi, h\moi)h\moo\trr(g\moo\trr a\moo)\\
&=&-\beta(h\moiv, g\moii)\beta(h\moiii, a\moii)\beta(g\moi, h\moii)\\
&&\times\beta(a\moi, h\moi)h\moo\trr(g\moo\trr a\moo)\\
&=&-\beta(h\moiv, g\moii)\beta(h\moii, a\moii)\beta(g\moi, h\moiii)\\
&&\beta(a\moi, h\moi)h\moo\trr(g\moo\trr a\moo)\\
&=&-\beta(h\moii, g\moii)\beta(g\moi, h\moi)h\moo\trr(g\moo\trr
a\moo)\\
&=&-h\trr(g\trr a)
\end{eqnarray*}
Similarly, 
$$\mbox{2ed of RHS} =-h\trl(g\trr a)$$
and 
\begin{eqnarray*}
&&\mbox{3rd of RHS}=\\
&=&-\beta(h\moii, g\moi a\moi)\beta((g\moo\trl a\moo)\moi, h\moi)[h\moo, (g\moo\trl a\moo)\moo]\\
&=&-\beta(h\moii, g\moii a\moii)\beta(g\moi a\moi, h\moi)[h\moo, g\moo\trl a\moo]\\
&=&-\beta(h\moiv, g\moii)\beta(h\moiii, a\moii)\beta(g\moi, h\moi)\\
&&\times\beta(a\moi, h\moii)[h\moo, g\moo\trl a\moo]\\
&=&-\beta(h\moii, g\moii)\beta(h\moiii, a\moii)\beta(g\moi,h\moi)\\
&&\times\beta(a\moi, h\moiv)[h\moo, g\moo\trl a\moo]\\
&=&-\beta(h\moii, a\moii)\beta(a\moi, h\moi)[h\moo, g\moo\trl a\moo]\\
&=&-[h, g\trl a].
\end{eqnarray*}
Now by (BB2) and $\caa$ is a left $\chh$-module we have that the sum
of three item equals to zero. The other cases can be checked
similarly.\end{proof}

\begin{proposition} Assume that $\caa$ and $\chh$ are  $(H,
\beta)$-Lie bialgebras,  $(\caa, \chh)$ is a matched pair of $(H,
\beta)$-Lie algebras;  $\caa$ is a left $\chh$-module Lie coalgebra;
$\chh$ is a right $\caa$-module  $(H, \beta)$-Lie coalgebra. If
$(\id_{\chh}\ot \trl)(\delta_{\chh}\ot \id_{\caa})+(\trr
\ot\id_{\caa} )( \id_{\chh}\ot\delta_{\caa})=0$, i.e.

$\begin{array}{lc} (BB3):\  & \sum h\li\ot h\lii\trr a+\sum h\trl
a\li\ot a\lii=0,
\end{array}$

\noindent then $\caa\bowtie \chh$ becomes an $(H, \beta)$-Lie
bialgebra.
\end{proposition}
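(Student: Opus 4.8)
The plan is to equip the double cross sum $\caa \bowtie \chh = \caa \oplus \chh$ with the direct-sum cobracket $\delta := \delta_{\caa} \oplus \delta_{\chh}$, so that $\delta(a \oplus h) = \delta_{\caa}(a) + \delta_{\chh}(h)$ with each summand read inside $(\caa \oplus \chh)^{\ot 2}$, and to check that together with the bracket of Theorem~\ref{th1} this makes $\caa \bowtie \chh$ an $(H,\beta)$-Lie bialgebra. Since $\delta_{\caa}(a) \in \caa \ot \caa$ and $\delta_{\chh}(h) \in \chh \ot \chh$, the cobracket of a homogeneous element never leaves the tensor square of its own summand. Consequently the $\beta$-anticocommutativity and the $\beta$-co-Jacobi identity for $\delta$, as well as the $H$-comodule morphism property, hold componentwise and are inherited verbatim from $\caa$ and $\chh$; there is nothing new to verify for the Lie coalgebra axioms. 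The entire content of the statement is therefore the compatibility condition (LB).

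By bilinearity it is enough to verify (LB) on pairs of homogeneous elements. If both arguments lie in $\caa$ (respectively both in $\chh$), every term of (LB) stays inside $\caa^{\ot 2}$ (respectively $\chh^{\ot 2}$) and the identity collapses to (LB) for the Lie bialgebra $\caa$ (respectively $\chh$), which holds by assumption. The decisive case is the mixed one; I would take $x = h \in \chh$ and $y = a \in \caa$, the reversed order $x=a$, $y=h$ then following from $\beta$-anticommutativity of the bracket together with $\beta$-anticocommutativity of $\delta$.

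For the mixed case I would expand both sides of (LB) and sort by bidegree in $(\caa \oplus \chh)^{\ot 2}$. On the left, $[h,a] = h \trr a + h \trl a$ by the formulas recalled before Theorem~\ref{th1}, so $\delta([h,a]) = \delta_{\caa}(h \trr a) + \delta_{\chh}(h \trl a)$, a pure $\caa \ot \caa$ term plus a pure $\chh \ot \chh$ term. On the right I would rewrite each bracket, e.g.\ $[h, a\li] = h \trr a\li + h \trl a\li$ and similarly for the other three terms of (LB), and then separate the four bidegree components. The $\caa \ot \caa$ component is $h \trr a\li \ot a\lii + \beta(h\moi, a\li\moi) a\li\moo \ot h\moo \trr a\lii$, which is exactly $\delta_{\caa}(h \trr a)$ by the left $\chh$-module Lie coalgebra condition; the $\chh \ot \chh$ component assembles into $\delta_{\chh}(h \trl a)$ by the right $\caa$-module Lie coalgebra condition for $\chh$. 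These two components already reproduce the left-hand side, so (LB) holds iff the two remaining cross components vanish. The $\chh \ot \caa$ component collects precisely to $h \trl a\li \ot a\lii + h\li \ot h\lii \trr a$, which is the left side of (BB3) and hence is zero; the $\caa \ot \chh$ component is its $\beta$-twisted mirror and vanishes by the same identity once the comodule-map property $\rho(h \trr a) = h\moi a\moi \ot h\moo \trr a\moo$ and (CT2)--(CT4) are used to move the $\beta$-factors — equivalently, it must vanish because $\delta([h,a])$ is already $\beta$-anticocommutative and its $\chh \ot \caa$ part is.

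I expect the main obstacle to be exactly this bidegree bookkeeping in the mixed case: correctly propagating the coactions through each of the four terms of (LB), applying the module Lie coalgebra conditions to the two diagonal components, and matching the $\beta$-factors so that the two off-diagonal components collapse onto (BB3) and its mirror. This is the same style of comodule-and-bicharacter manipulation already carried out in the proof of Theorem~\ref{th1}; once the four components are cleanly separated, each reduces to a single hypothesis and no further structural input is needed.
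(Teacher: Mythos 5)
Your proposal is correct and follows essentially the same route as the paper: the paper also takes the direct-sum cobracket, reduces to checking (LB) on the mixed case $h\in\chh$, $a\in\caa$, expands the right-hand side into eight terms sorted by which summands they land in, identifies the $\caa\ot\caa$ and $\chh\ot\chh$ components with $\de(h\trr a)$ and $\de(h\trl a)$ via the module Lie coalgebra conditions, and cancels the two cross components using (BB3) and its $\beta$-twisted mirror. No substantive difference.
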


\begin{proof}  The Lie algebra structure is as in theorem \ref{th1}.
The Lie cobracket is the one inherited from $\caa$ and $\chh$.
$\caa$ and $\chh$ are also $(H, \beta)$-Lie sub-bialgebras of
$\caa\bowtie \chh$. So we only check  equation (LB) on $\caa\ot
\chh$. For $h\in \chh, a\in \caa$, $\de[h, a]=\de( h\trr a)+\de
(h\trl a)$, and by the ad-action on tenor product
\begin{eqnarray*}
&& h\trr \de(a)+\de(h)\trl a\\
 &=& h\trr a\li\ot a\lii(1) +h\trl a\li\ot a\lii(2)+\beta(h\moi, a\li\moi)\\
 &&a\li\moo\ot h\moo\trr a\lii(3)+ \beta(h\moi, a\li\moi)a\li\moo\ot h\moo\trl a\lii(4)\\
 &&+h\li\ot h\lii \trl a(5) +h\li\ot h\lii \trr a(6) +\beta(h\lii\moi, a\moi)\\
&&h\li\trl a\moo\ot h\lii\moo(7)+\beta(h\lii\moi, a\moi)h\li\trr
a\moo\ot h\lii\moo(8)
\end{eqnarray*}
By (BB3), $(2)+(6)=0$,  $(4)+(8)=0$. For the remaining four terms,
$\de( h\trr a)=(1)+(3)$ and $\de (h\trl a)=(5)+(7)$.
\end{proof}

\section{Cocycle Twists of Matched Pairs of $(H, \beta)$-Lie Algebras}

The cocycle twist $\caa\usi$ of an $(H, \beta)$-Lie algebra $\caa$
was introduced in \cite{BFM01}. It is an $(H, \beta\dsi)$-Lie
 algebra with the map $[, ]\usi: \caa\ot \caa\to \caa$ given by
 $$[a, b]\usi=\si(a\moi, b\moi)[a\moo, b\moo],$$
 where $ \forall a, b\in \caa$.
If in addition,  $\caa$ is a left $\chh$-module $\trr :\chh \otimes \caa \to \caa$, then we obtain that $\caa\usi$ is a
left $\chh\usi$-module $\trr\usi :\chh \otimes \caa \to \caa$, 
$$h\trr\usi  a=\si(h\moi, a\moi)h\moo\trr a\moo.$$ 
See \cite[Propostion 4.7]{BFM01}. Similarly, $\caa$ is a right $\chh$-module $\trl :\caa \otimes \chh \to \chh$, then we obtain a right $\caa\usi$-module $\trl\usi
:\chh \otimes \caa \to \chh$ by 
$$h \trl\usi  a=\si(h\moi,a\moi)h\moo\trl a\moo.$$ 
We now prove that the cocycle twist of a
matched pair of $(H, \beta)$-Lie algebras can also matched.
\begin{theorem}
 If  $(\caa, \chh)$ is a matched pair of $(H, \beta)$-Lie algebras,
 then $(\caa\usi, \chh\usi)$ is a matched pair of $(H, \beta\dsi)$-Lie algebras.
Furthermore, their double cross sum  $\caa\usi\bowtie \chh\usi$ form an $(H, \beta\dsi)$-Lie
 algebra.
\end{theorem}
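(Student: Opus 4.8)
The plan is to isolate the genuinely new content — the two module-matching axioms (BB1) and (BB2) rewritten for the twisted data with $\beta\dsi$ in place of $\beta$ — and then to obtain the double cross sum from Theorem \ref{th1}. The discussion preceding the statement already supplies that $\caa\usi$ and $\chh\usi$ are $(H,\beta\dsi)$-Lie algebras, that $\caa\usi$ is a left $\chh\usi$-module via $h\trr\usi a=\si(h\moi,a\moi)h\moo\trr a\moo$, and that $\chh\usi$ is a right $\caa\usi$-module via $h\trl\usi a=\si(h\moi,a\moi)h\moo\trl a\moo$. Hence only (BB1) and (BB2) for these twisted operations remain to be checked; once they hold, Theorem \ref{th1}, now read over the twisted cotriangular Hopf algebra $(H\dsi,\beta\dsi)$ furnished by the Proposition above, yields that $\caa\usi\bowtie\chh\usi$ is an $(H,\beta\dsi)$-Lie algebra.

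First I would verify the twisted (BB1). The method is to insert the definitions of $[,]\usi$, $\trr\usi$, $\trl\usi$ and of $\beta\dsi(h,l)=\si\inv(l\di,h\di)\beta(h\dii,l\dii)\si(h\diii,l\diii)$ into each of its four terms, expand the nested coactions by coassociativity, and then recombine. The tools are exactly those used in the proof of Theorem \ref{th1}: the comodule-map property of $[,]$, $\trr$ and $\trl$ (to move coactions through the operations), the axioms (CT2)--(CT4) together with the cocommutativity of $H$ (to transport the $\beta$-factors), and now in addition the left cocycle identity for $\si$ (to fuse the accumulated $\si^{\pm1}$-values). The goal is to show that the twisted left-hand side and each of the four twisted right-hand terms share a common $\si$-factor; stripping it reduces the twisted identity to the original (BB1), which holds by hypothesis. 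The twisted (BB2) is handled in the same way, with the right cocycle identity for $\si$ replacing the left one.

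A more conceptual route, which I would develop in parallel as a check, is to prove the identity $(\caa\bowtie\chh)\usi=\caa\usi\bowtie\chh\usi$ of $(H,\beta\dsi)$-Lie algebras directly on brackets. Starting from the mixed bracket $[h,a]=h\trr a+h\trl a$ recorded in the proof of Theorem \ref{th1} (together with the companion relation for $[a,h]$ given by $\beta$-anticommutativity), one twists the double-cross-sum bracket by $\si$ and checks, by the same cocycle/cotriangularity bookkeeping, that it coincides with the bracket assembled from $\caa\usi,\chh\usi$ and $\trr\usi,\trl\usi$. Granting this, the ``furthermore'' clause is immediate: $\caa\bowtie\chh$ is an $(H,\beta)$-Lie algebra by Theorem \ref{th1}, so its twist is an $(H,\beta\dsi)$-Lie algebra by the construction of \cite{BFM01}, and that twist is precisely $\caa\usi\bowtie\chh\usi$; moreover (BB1) and (BB2) for the twisted data can then be recovered by projecting the $\beta\dsi$-Jacobi identity onto the $\caa$- and $\chh$-summands.

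In either route the main obstacle is the same: controlling the product of cocycle values that each term generates. Since $\beta\dsi$ already carries three $\si$-factors and each twisted action carries one more, a single term of (BB1) or (BB2) yields a long string of $\si^{\pm1}$ evaluated on nested Sweedler legs; the crux is to show that this string collapses — through exactly one application of the left (respectively right) cocycle identity, aided by cocommutativity to realign the legs — to the single $\si$-factor demanded by the untwisted axiom. Everything else is the bookkeeping already rehearsed in the proof of Theorem \ref{th1}, now carried out with $\beta$ replaced throughout by $\beta\dsi$.
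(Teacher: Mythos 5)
Your first route is exactly the paper's proof: expand each term of the twisted (BB1) using the comodule-map property of the operations, (CT2)--(CT4), and cocommutativity, exhibit the common $\si$-factors, and collapse them via the left cocycle identity so that the identity reduces to the untwisted (BB1); (BB2) is treated symmetrically and Theorem \ref{th1} then gives the double cross sum. Your alternative route via $(\caa\bowtie\chh)\usi\cong\caa\usi\bowtie\chh\usi$ is precisely the content of the paper's final theorem (stated there without proof), so it is a legitimate cross-check but not the argument the paper uses here.
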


\begin{proof} Note that the bracket in $\caa\usi\bowtie
\chh\usi$ is given by 
\begin{eqnarray*}
{[a, b]}\usi&=&\si(a\moi, b\moi)[a\moo, b\moo],\\
{[h, a]}\usi&=&\si(h\moi, a\moi)h\moo\trr a\moo+\si(h\moi,a\moi)h\moo\trl a\moo,\\
{[a, h]}\usi&=&-\si(h\moii,a\moii)\beta(h\moi, a\moi)h\moo\trr a\moo-\si(h\moii,a\moii)\beta(h\moi, a\moi)h\moo\trl a\moo.
\end{eqnarray*}
We check that the matched pair conditions (BB1) and (BB2) are valid on $(\caa\usi,
\chh\usi)$. We want to obtain that
\begin{eqnarray*}
h\trr\usi [a, b]\usi&=&[h\trr\usi a, b]\usi+\beta\dsi(h\moi, a\moi)[ a\moo, h\moo\trr\usi b]\usi\\
&&+(h\trl\usi a)\trr\usi b-\beta\dsi(a\moi, b\moi)(h\trl\usi b\moo)\trr\usi a\moo.
\end{eqnarray*} In fact,
\begin{eqnarray*} h\trr\usi [a, b]\usi &=&\si(h\moi, [a\moo,
b\moo]\moi)\si(a\moi,
b\moi)h\moo\trr [a\moo, b\moo]\moo\\
&=&\si(h\moi, a\moi b\moi)\si(a\moii, b\moii)h\moo\trr [a\moo,
b\moo]
\end{eqnarray*}
and \begin{eqnarray*} [h\trr\usi a, b]\usi &=& \si(h\moi,
a\moi)\si((h\moo\trr a\moo)\moi,
b\moi)[(h\moo\trr a\moo)\moo, b\moo]\\
&=& \si(h\moii, a\moii)\si(h\moi a\moi, b\moi)[h\moo\trr a\moo,
b\moo]
\end{eqnarray*}
Similarly, we get $ (h\trl\usi a)\trr\usi b = \si(h\moii,
a\moii)\si(h\moi a\moi, b\moi)(h\moo\trl a\moo)\trr b\moo$. Also,
\begin{eqnarray*}
&&\beta\dsi(h\moi, a\moi)[ a\moo, h\moo\trr\usi b]\usi\\
&=& \si(h\moiii, a\moiii)\beta(h\moii, a\moii)\si(a\moi, h\moi) [
a\moo, h\moo\trr\usi b]\usi\\
&=& \si(h\mov, a\moiv)\beta(h\moiv, a\moiii)\si(a\moii, h\moiii)\\
&&\si(h\moii, b\moii)\si(a\moi, h\moi b\moi)[a\moo, h\moo\trr b\moo]\\
&=& \si(h\mov, a\mov)\beta(h\moiv, a\moiv)\si\inv(a\moiii, h\moiii)\\
&&\si(a\moii, h\moii)\si(a\moi h\moi,  b\moi)[a\moo, h\moo\trr b\moo]\\
&=& \si(h\moiii, a\moiii)\beta(h\moii, a\moii)\si(h\moi a\moi,  b\moi)[a\moo, h\moo\trr b\moo]\\
&=& \si(h\moiii, a\moiii)\si(h\moii a\moii,  b\moii)\beta(h\moi,
a\moi)[a\moo, h\moo\trr b\moo]
\end{eqnarray*}
Similarly, we get 
\begin{eqnarray*}
\beta\dsi(a\moi, b\moi)(h\trl\usi b\moo)\trr\usi
a\moo&=&\si(h\moiii, a\moiii)\si(h\moii a\moii, b\moii)\\
&&\times\beta(h\moi,a\moi)(h\moo\trl b\moo)\trr a\moo. 
\end{eqnarray*} 
Now by the cocycle condition
of $\si$ and (BB1) on $(\caa, \chh)$, we get the result. Similar
argument can be performed for (BB2) on $(\caa\usi, \chh\usi)$.
\end{proof}

We now give the relationship between the $(H, \beta)$-Lie algebra
$\caa\usi\bowtie \chh\usi$ and $ (\caa\bowtie \chh)\usi$ by the
following theorem, the proof can easily be seen from their
construction so we omit it.

\begin{theorem}
 If  $(\caa, \chh)$ is a matched pair of $(H, \beta)$-Lie algebras,
 then   $\caa\usi\bowtie \chh\usi\cong (\caa\bowtie \chh)\usi$.
\end{theorem}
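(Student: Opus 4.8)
The plan is to show that the identity map on the underlying vector space $\caa\oplus\chh$ is the desired isomorphism of $(H,\beta\dsi)$-Lie algebras. Since cocycle twisting changes neither the underlying space nor the $H$-comodule structure, the map $\phi:\caa\usi\bowtie\chh\usi\to(\caa\bowtie\chh)\usi$ defined by $\phi(a\oplus h)=a\oplus h$ is automatically an $H$-comodule isomorphism; the only substantive point is to verify that it intertwines the two Lie brackets. Both brackets are bilinear, so it suffices to compare them on the four types of pairs coming from the two summands: $[a,b]$ with $a,b\in\caa$, $[h,g]$ with $h,g\in\chh$, and the cross terms $[h,a]$ and $[a,h]$ with $h\in\chh$, $a\in\caa$.

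First I would dispatch the diagonal cases. On $\caa$ the bracket of $(\caa\bowtie\chh)\usi$ is $\si(a\moi,b\moi)[a\moo,b\moo]$, which is precisely the bracket of $\caa\usi$ as it sits inside $\caa\usi\bowtie\chh\usi$; the identical reasoning applies to $\chh$. Next I would treat the cross term $[h,a]$. In $(\caa\bowtie\chh)\usi$ it equals $\si(h\moi,a\moi)[h\moo,a\moo]$, and by the formula $[h,a]=h\trr a+h\trl a$ recorded in the proof of Theorem \ref{th1} this becomes $\si(h\moi,a\moi)(h\moo\trr a\moo+h\moo\trl a\moo)$. On the other side, in $\caa\usi\bowtie\chh\usi$ the same bracket is $h\trr\usi a+h\trl\usi a$, and by the definitions $h\trr\usi a=\si(h\moi,a\moi)h\moo\trr a\moo$ and $h\trl\usi a=\si(h\moi,a\moi)h\moo\trl a\moo$ the two expressions coincide term by term.

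The remaining cross term $[a,h]$ then requires no separate computation. Both sides are genuine $(H,\beta\dsi)$-Lie algebras — the left by the previous theorem together with Theorem \ref{th1} applied to the twisted matched pair, the right by the twist construction recalled at the start of this section — so both satisfy $\beta\dsi$-anticommutativity. Since $[h,a]$ already agrees as an $H$-comodule map and $\beta\dsi$ is common to the two structures, the identity $[a,h]=-\beta\dsi(a\moi,h\moi)[h\moo,a\moo]$ forces $[a,h]$ to agree as well. Having matched all four cases, I would conclude that $\phi$ is an isomorphism of $(H,\beta\dsi)$-Lie algebras.

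The one point that deserves care, and the reason the assertion is more than a tautology, is that the cocycle factor produced by twisting the full double-cross-sum bracket must match the factor produced by twisting the actions $\trr$ and $\trl$ separately. The computation of $[h,a]$ above shows these are literally the same single scalar $\si(h\moi,a\moi)$, so in fact no cocycle identity for $\si$ is invoked for the cross terms at all. The only genuine obstacle is thus bookkeeping of the Sweedler indices $\moi,\moo,\moii$, which the transparent form of the twisted actions makes routine; this is why the proof can safely be left to the reader.
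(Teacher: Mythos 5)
Your argument is correct and is precisely the case-by-case comparison of brackets that the paper has in mind when it says the proof ``can easily be seen from their construction'' and omits it: the identity on $\caa\oplus\chh$ intertwines the two brackets, with the diagonal and $[h,a]$ cases matching on the nose via $h\trr\usi a=\si(h\moi,a\moi)h\moo\trr a\moo$ and $h\trl\usi a=\si(h\moi,a\moi)h\moo\trl a\moo$. Your use of $\beta\dsi$-anticommutativity of both structures to dispose of $[a,h]$ is a sensible shortcut that legitimately leans on Theorem 3.1 and the twist construction, and it sidesteps the only computation where one would otherwise have to juggle $\si\inv$ against $\si$ using cocommutativity.
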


\section*{Acknowledgements}
The author would like to thank the referee for helpful comments and
suggestions.

\vskip7pt
\footnotesize{
\noindent Tao Zhang\\
College of Mathematics and Information Science,\\
Henan Normal University, Xinxiang 453007, P. R. China;\\
 E-mail address: \texttt{{zhangtao@htu.edu.cn}}


\begin{thebibliography}{9}
\itemsep-3pt


\bibitem{BFM01} Y.~Bahturin, D.~Fischman and S.~Montgomery.
Bicharacters, Twistings, and Scheuert's Theorem of  Hopf algebras,
J. Algebra, {\bf 236}(2001), 246--276.

\bibitem{FM94} D.~Fischman and S.~Montgomery.
A Schur Double Centralizer Theorem of Cotriangular Hopf algebras and
Generalized Lie Algebras, J. Algebra, {\bf 168}(1994), 594--614.

\bibitem{CSO06} X. W.~Chen, S. D.~Silvestrov and F.~van Oystaeyen.
 Representations and Cocycle Twists  of  Color Lie Algebras,
 Alg. Repres. Theory, {\bf 9}(2006), 633--650.

\bibitem{Ma90}
S.~Majid.  Matched pairs of Lie groups associated to solutions of
the Yang-Baxter equations, Pacific J. Math.  {\bf 141} (1990),
311--332.

\bibitem{Ma95} S.~Majid.  Founditions of Quantum Groups.
Cambridge University Press, Cambridge, 1995.

\bibitem{Sch79}
M.~Scheunert. Generalized Lie algebras,  J. Math. Phys. {\bf
20}(1979), 712--720.



\end{thebibliography}
\end{document}